\newcommand{\p}{\partial}
\renewcommand{\phi}{\varphi}
\renewcommand{\a}{\alpha}
\newcommand{\R}{{\mathbb R}}
\newtheorem{example}{Example}
\newtheorem{theorem}{Theorem}
\newtheorem{remark}{Remark}
\title{Dynamical Inference for Transitions in  Stochastic Systems with $\alpha-$stable L\'evy Noise
 \footnote{This work was partly supported by the NSF Grant  1025422. } }
\author{Ting Gao, Jinqiao Duan \& Xingye Kan   \\
\\
 Department of Applied Mathematics, Illinois Institute of Technology \\
  Chicago, IL 60616, USA \\
 \emph{E-mail:  tinggao0716@gmail.com,  duan@iit.edu }\\
   \&\\
School of Mathematics, University of Minnesota\\
Minneapolis,  MN 55414, USA\\
\emph{E-mail: xkan@umn.edu     }  }
\begin{document}
\date\today

\maketitle

\pagestyle{plain}

\begin{abstract}
 A goal of data assimilation is to infer   stochastic dynamical behaviors  with available   observations.  We consider transition phenomena between metastable states for a stochastic system with (non-Gaussian) $\alpha-$stable L\'evy noise.
 With either discrete time or continuous time observations, we infer such transitions by computing the corresponding nonlocal Zakai equation (and its discrete time counterpart) and examining the most probable orbits for the state system. Examples are presented to demonstrate this approach.

\medskip

 {\bf Short Title:}   Transitions in Non-Gaussian Stochastic Systems  \\

 {\bf Key Words:}   Nonlocal Zakai equation; nonlocal Laplace operator;
 non-Gaussian noise; transitions between metastable states; most probable orbits


\textbf{PACS} (2010):   05.40.Ca, 02.50.Fz, 05.40.Fb, 05.40.Jc

\end{abstract}

\section{Introduction}  \label{intro}

Random fluctuations in  nonlinear systems in engineering and science
are often  non-Gaussian \cite{Woy}. For instance, it has been argued
that diffusion by geophysical turbulence \cite{Shlesinger}
corresponds to a series of  ``pauses", when the
particle is trapped by a coherent structure, and ``flights" or
``jumps" or other extreme events, when the particle moves in the jet
flow. Paleoclimatic data \cite{Dit, Dit2} also indicate such irregular
processes. There are also experimental demonstrations of L\'evy flights  in  
foraging theory and rapid geographical spread of emergent infectious
disease.   Humphries et. al. \cite{Humphries}   used GPS to track the wandering
black bowed albatrosses around an Island in Southern Indian Ocean to
study the movement patterns of searching food.   They found that
by fitting the data of the movement steps, the movement patterns
obeys the power-law property with power parameter $\alpha=1.25$.
To get the data set of human mobility that covers
all length scales,  Brockmann  \cite{Brockmann}  collected data by online bill trackers, which
  give successive spatial-temporal trajectories with a very high
resolution. When fitting the data of probability of bill traveling
at certain distances within a short period of time (less than one
week), he found power-law distribution property with power parameter $\alpha=1.6$, and
 observed that $\alpha-$stable L\'evy motions are
strikingly similar to practical data of human influenza.

L\'evy motions are thought to be appropriate models for a class of important
non-Gaussian processes with jumps \cite{Sato-99, Bertoin-98, taqqu}.
Recall that a L\'evy motion $L(t)$, or $L_t$, is a stochastic process with stationary
and independent increments. That is, for any $s, t$ with  $0\le s< t$, the distribution
of $L_t-L_s$ only depends on $t-s$, and for any $0\le t_0<t_1<\cdots<t_n$,
the random variables $L_{t_i}-L_{t_{i-1}}$, $i=1, \cdots, n$, are independent.  A L\'evy motion
has a version  whose sample paths  are almost surely
right continuous with left limits.


  Stochastic differential equations (SDEs)
with   non-Gaussian L\'evy noises
have attracted much attention  recently \cite{DuanBook2015, Apple,  Schertzer}.
To be specific, let us   consider the following  n-dimensional  \emph{state system}:
\begin{equation} \label{state}
d X_{t}  =  f (X_{t}, t)  dt +  d L_{t}^\alpha, \;\;     X_0 = x,
\end{equation}
 where $f$ is a vector field (also called a  drift),   and $L_{t}^\alpha$ is a symmetric $\alpha-$stable L\'evy motion ($0<\alpha <2$),  defined in a probability space $(\Omega, \mathcal{F}, \mathbb{P})$.
Assume that we have either   \\
(i)  a discrete time  m-dimensional \emph{observation system}:
 \begin{eqnarray} \label{observe1}
 y_k = h(x_k, t_k) + \sqrt{R_k} v_k, \;\;   k=0, 1, \cdots,
\end{eqnarray}
where $v_k$ is a white sequence of Gaussian random variables, i.e. $v_k$'s are mutually independent standard normal random variables, and $R_k$ is a sequence of nonnegative numbers; \\
or \\
(ii) a continuous time  m-dimensional  \emph{observation system}:
 \begin{equation} \label{observe2}
dY_t = h(X_t, t)dt + d W_t, \;\;      Y_0=y,
\end{equation}
where $h$ is a given vector field and $W_t$ is a Brownian motion.

\medskip

In the present paper, we estimate  system states,    with help of observations,   and in particular, we try to capture transitions between metastable states by examining most probable paths for system states.

This paper is organized as follows.  We   consider state estimates with discrete time  and continuous time observations in Sections 2 and 3, respectively.


\section{Inferring  transitions with discrete time observations }    \label{infer1}

 To demonstrate our ideas, we consider
  the following scalar SDE driven by a symmetric $\alpha$-stable L\'evy motion
\begin{equation} \label{state1}
dX_t = f(X_t, t)dt + dL_t^\alpha, \;\; X_0= x_0, 
\end{equation}
 together with observations $y_k$ are taken at discrete time instants $t_k$ as follows
\begin{eqnarray} \label{obs1}
 y_k = h(x_k, t_k) + \sqrt{R_k} v_k, \;\; k=0, 1, \cdots,
\end{eqnarray}
where $v_k$ is a white sequence of Gaussian random variables.

\medskip

For $\alpha\in (0, 2)$,  a symmetric $\alpha$-stable
  L\'evy  motion $L_t^\alpha$  has
the generating triplet   $(0, 0, \nu_\alpha)$,  where the jump measure
$$
\nu_\a({\rm d}x)=C_\alpha|x|^{-(1+\alpha)}\, {\rm d}x
$$
with $C_\alpha$ given by the formula
$\displaystyle{C_{\alpha} =
\frac{\alpha}{2^{1-\alpha}\sqrt{\pi}}
\frac{\Gamma(\frac{1+\alpha}{2})}{\Gamma(1-\frac{\alpha}{2})}}$.
For more information see \cite{Apple, DuanBook2015}.
Thus, the generator for the solution process   $X_{t} $  in \eqref{state1} is
\begin{eqnarray} \label{AABB}
A  \phi &=& ( \p_x f   )  \phi^{'}(x)     \nonumber  \\
& &+ \int_{\R^1 \setminus\{0\}} [\phi(x+  y)-\phi(x) -
I_{\{|y|<1\}} \; y \;  \phi'(x) ] \; \nu_\alpha({\rm d}y).
\end{eqnarray}
In fact, this linear operator $A$ is a nonlocal Laplace operator (\cite[Ch. 7]{DuanBook2015}), denoted also by $(-\Delta )^{\frac{\alpha}{2}}$, for $\alpha \in (0, 2)$.
The generator $A$ carries crucial information about the system state $X_t$, and hence will be useful in our investigation of state estimation.
Also note that the non-Gaussianity of the L\'evy noise manifests as nonlocality (an integral term) in the generator.
 The adjoint operator for the generator  $A$ is
 \begin{eqnarray} \label{CCDD}
 A^*p  &=&  -\p_x (f(x, t) p(x, t))     \nonumber \\
& &+   \int_{\R^1 \setminus\{0\}}
 [p(x+y, t)-p(x,t) - I_{\{|y|<1\}} \; y \; \p_x p(x, t) ] \; \nu_\alpha(dy).
\end{eqnarray}

The Fokker-Planck equation  for the SDE (\ref{state1}) is  (\cite{Apple, DuanBook2015}):
\begin{eqnarray} \label{fpe-2999}
p_t &=& -\p_x (f(x, t) p(x, t))     \nonumber \\
& &+   \int_{\R^1 \setminus\{0\}} [p(x+y, t)-p(x,t) - I_{\{|y|<1\}} \; y \p_x p(x, t) ] \; \nu_\alpha(dy).
\end{eqnarray}

\medskip


Denote $Y_{t_k} = \{ y_0, y_1,\cdots, y_k\}$.   Similarly as in \cite{Jaz}, we have the following theorem which determines the time evolution of the conditional probability density function $p(x,t\mid Y_t)$. For convenience, we often write $p(x, t)$ for $p(x,t\mid Y_t)$.

\begin{theorem}(Conditional Density for Continuous-discrete
Problems). Let system (\ref{state1}) satisfy the hypotheses that $f$ is
Lipschitz in  space and  the initial state $X_0$, with  the property
$E(|X_0|^2)<\infty$,  is independent of $\{L_t^\alpha, t\in[t_0,T]\}$. Suppose
that the prior density $p(x,t)$ for(\ref{state1}) exists and is once
continuously differentiable with respect to $t$ and twice with
respect to $x$. Let $h$ be continuous in both arguments and bounded
for each $t_k$   with probability  $1$. \\
Then, between observations, the conditional density $p(x,t|Y_t)$
satisfies the Fokker-Planck equation
\begin{equation}\label{Kolmog}
dp(x,t|Y_t) = A^*p \;  dt, \ \ \ t_k\leq t < t_{k+1},\ \
p(x,t_0|Y_{t_0}) = p(x_{t_0}),
\end{equation}
where $A^*$ is the   operator in (\ref{CCDD}). At an observation
$(\text{at }  t_k) $,    the conditional density satisfies the  following difference
equation
\begin{eqnarray}
p(x, t_k|Y_{t_k}) = \frac{p(y_k|x)p(x, t_k|Y_{t_k^-})
}{\int_{\R^1}p(y_k|\xi) p(\xi, t_k|Y_{t_k^-})d\xi },
\end{eqnarray}
where $p(y_k|x)$ is
\begin{equation}
p(y_k|x) = (1/(2\pi)^{1/2}|R_k|^{1/2}) exp\{ - \frac{1}{2}
[y_k-h(x,t_k)]^T R_k^{-1}[y_k-h(x,t_k)]\}.
\end{equation}

\end{theorem}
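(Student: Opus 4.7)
The plan is to split the argument into the two regimes in the theorem statement: a pure prediction step on each interval $(t_k, t_{k+1})$, and a Bayesian update at each observation instant $t_k$. The reason this clean split is possible is that on $(t_k,t_{k+1})$ the conditioning $\sigma$-algebra generated by $Y_t = \{y_0,\dots,y_k\}$ is frozen, so the conditional density must satisfy the same forward equation as the unconditional one, only with the posterior at $t_k$ as initial datum; at $t_k$ a new measurement enters the information set and Bayes' rule does the bookkeeping.

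For the interior of an inter-observation interval I would pick a smooth, compactly supported test function $\phi$ on $\R$ and apply the It\^o formula for jump-diffusions (using the $(0,0,\nu_\alpha)$ representation of $L^\alpha$) to $\phi(X_t)$ on $[t_k, t]$:
\begin{equation*}
\phi(X_t) = \phi(X_{t_k}) + \int_{t_k}^t A\phi(X_s)\, ds + M_t^\phi,
\end{equation*}
where $A$ is the generator in (\ref{AABB}) and $M^\phi$ is a local martingale arising from the compensated Poisson random measure of $L^\alpha$. Taking the conditional expectation given $Y_{t_k}$ kills $M^\phi$ (the hypothesis $\EX|X_0|^2<\infty$ together with Lipschitz $f$ and the boundedness of $A\phi$ secure enough integrability for a true martingale), and rewriting both sides through $p(x,t|Y_{t_k})$ gives
\begin{equation*}
\int_\R \phi(x) p(x,t|Y_{t_k})\, dx - \int_\R \phi(x) p(x,t_k|Y_{t_k})\, dx = \int_{t_k}^t \!\!\int_\R A\phi(x)\, p(x,s|Y_{t_k})\, dx\, ds.
\end{equation*}
I would then transfer $A$ onto $p$ by nonlocal integration by parts, producing $A^*$ as in (\ref{CCDD}), differentiate in $t$, and invoke the arbitrariness of $\phi$ to obtain the pointwise identity (\ref{Kolmog}).

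At an observation time $t_k$, I would apply Bayes' rule to the decomposition $Y_{t_k} = (Y_{t_k^-}, y_k)$:
\begin{equation*}
p(x, t_k | Y_{t_k}) = \frac{p(y_k|x,t_k,Y_{t_k^-})\, p(x,t_k|Y_{t_k^-})}{p(y_k|Y_{t_k^-})}.
\end{equation*}
Since in (\ref{obs1}) the noise $v_k$ is independent of $X_\cdot$ and of the earlier measurement noises, the conditional law of $y_k$ given $\{X_{t_k}=x,\,Y_{t_k^-}\}$ is exactly that of $h(x,t_k)+\sqrt{R_k}\,v_k$, i.e.\ the Gaussian density $p(y_k|x)$ written in the statement. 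The denominator is then $\int_\R p(y_k|\xi)\, p(\xi,t_k|Y_{t_k^-})\, d\xi$ by the law of total probability, which reproduces the claimed difference equation verbatim.

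The main obstacle is the nonlocal integration by parts that turns $A$ into $A^*$: the jump measure $\nu_\alpha({\rm d}y)=C_\alpha|y|^{-(1+\alpha)}\,dy$ is non-integrable at the origin, and the principal-value compensator $I_{\{|y|<1\}}\,y\,\phi'(x)$ appearing in (\ref{AABB}) has to be tracked carefully through the change of variables $x\mapsto x+y$ in the double integral before the pieces recombine into $A^*p$; the $C^{2}_x \cap C^{1}_t$ regularity hypothesized for $p$ is precisely what is needed to make this manipulation rigorous. Once this step is secured, everything else is routine: the martingale part of It\^o's formula vanishes by the usual filtration argument, and the update at $t_k$ is a purely Bayesian computation.
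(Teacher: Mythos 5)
Your proposal is correct and follows the same two-step structure as the paper's proof: evolve the conditional density by the nonlocal Fokker--Planck equation between observations, and apply Bayes' rule (using the whiteness of $v_k$ and the law of total probability for the normalizing denominator) at each observation instant, which matches the paper's update step essentially verbatim. The only difference is one of detail rather than of route: where the paper simply asserts that, in the absence of new observations, the conditional density satisfies the Fokker--Planck equation, you derive that fact from the It\^o formula for jump-diffusions and the duality between the generator $A$ and its adjoint $A^*$ --- a worthwhile elaboration (your point about tracking the compensator $I_{\{|y|<1\}}\,y\,\phi'(x)$ through the change of variables is exactly where the $C^2_x\cap C^1_t$ hypothesis is used), but not a different argument.
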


\begin{proof}
The conditional density in the absence of observation, satisfies
the Fokker-Planck equation.  Therefore, between observations,
conditional density $p(x,t|Y_t)$ satisfies  the Fokker-Planck
equation  (\ref{Kolmog}).

Thus, it remains to determine the relationship between
$p(x,t_k|Y_{t_k})$ and $$p(x,t_k|Y_{t_k^-}) \equiv
p(x,t_k|Y_{t_{k-1}}).$$

Since $p(x,t_k|Y_{t_k}) = p(x,t_k|y_k,Y_{t_{k-1}})$, we have by
Bayes' rule
$$p(x,t_k|Y_{t_k}) = \frac{p(y_k|x,t_k,Y_{t_{k-1}} ) P(x,t_k|Y_{t_{k-1}})}{p(y_k|Y_{t_{k-1}})}=
\frac{p(y_k|x_k,Y_{t_{k-1}} )
P(x,t_k|Y_{t_{k-1}})}{p(y_k|Y_{t_{k-1}})}.$$ 
Now, since the noise
$\{v_k\}$ is white, $$p(y_k|x_k, Y_{t_{k-1}})=p(y_k|x_k).$$
Similarly, we compute 
$$ p(y_k|Y_{t_{k-1}}) = \int
p(y_k|x) p(x,t_k|Y_{t_{k-1}})dx.$$ Therefore,
$$ p(x, t_k|Y_{t_k}) = \frac{p(y_k|x)p(x, t_k|Y_{t_k^-}) }{\int_{\R^1}p(y_k|\xi) p(\xi, t_k|Y_{t_k^-})d\xi }.$$
This completes the proof.
\end{proof}

\bigskip




This theorem provides the foundation for computing conditional density for system state of SDE (\ref{state1}), under discrete time observations.

Define $x^*(x_0, t) \triangleq \mbox{ maximizer for } \max_{x\in \R^1} p(x, t)$.\\
This provides the  most probable  orbit  (\cite{DuanBook2015, ChengDuanWang}) starting at $x_0$. These most probable orbits are the maximal likely orbits for a dynamical system under noisy fluctuations.  

Let us consider an example.

\begin{example} \label{discrete}
Let us consider a scalar system with state equation
\begin{equation} \label{sde}
{\rm d}X_{t}  = 4(X_t - X_t^3)  {\rm d}t +  \sqrt{0.24} \; {\rm d}
L_{t}^\alpha, \;\; X_0 = x.
\end{equation}
The discrete-time scalar observation is:
\begin{eqnarray} \label{obs1}
 y_k = h(x_k, t) + \sqrt{R_k} v_k,
\end{eqnarray}
with $h(x, t) = x$,  and $ R_k   \equiv 0.1$.

In the absence of L\'evy  noise,  this system has two stable states: $-1$ and $+1$.
When the   noise kicks in, these two states are no longer fixed. The random system evolution near these two states, together with possible transitions between them, is sometimes called a metastable phenomenon \cite{meta}.  For convenience, we call $-1$ and $+1$ (and random motions   nearby)    metastable states.

The corresponding nonlocal Fokker-Planck equation is computed on $(-2.5,2.5)$
with a finite difference scheme (\cite{Gao, Gao2}) under  the natural boundary condition. Space stepsize $\Delta x=0.05$ and time
stepsize $\Delta t= 0.001$.  The initial probability density is taken either as a Gaussian distribution or a uniform distribution.

  In Figures \ref{discrete1} and  \ref{discrete2}, we show the conditional density $p(x, t)$, together with the corresponding most probable orbit (taken   as the state estimation  for $X_t$),  together with observations $y_k$ and a true state path $X_t$. The initial density is either Gaussian or uniform, but centered at the metastable state $-1$. Notice that the estimated state captures the transitions from $-1$ to $+1$ and then back to $-1$, during the time period $0 <t<10$.

\begin{figure}[]
\begin{center}
\includegraphics*[width=6.2cm,height=5cm]{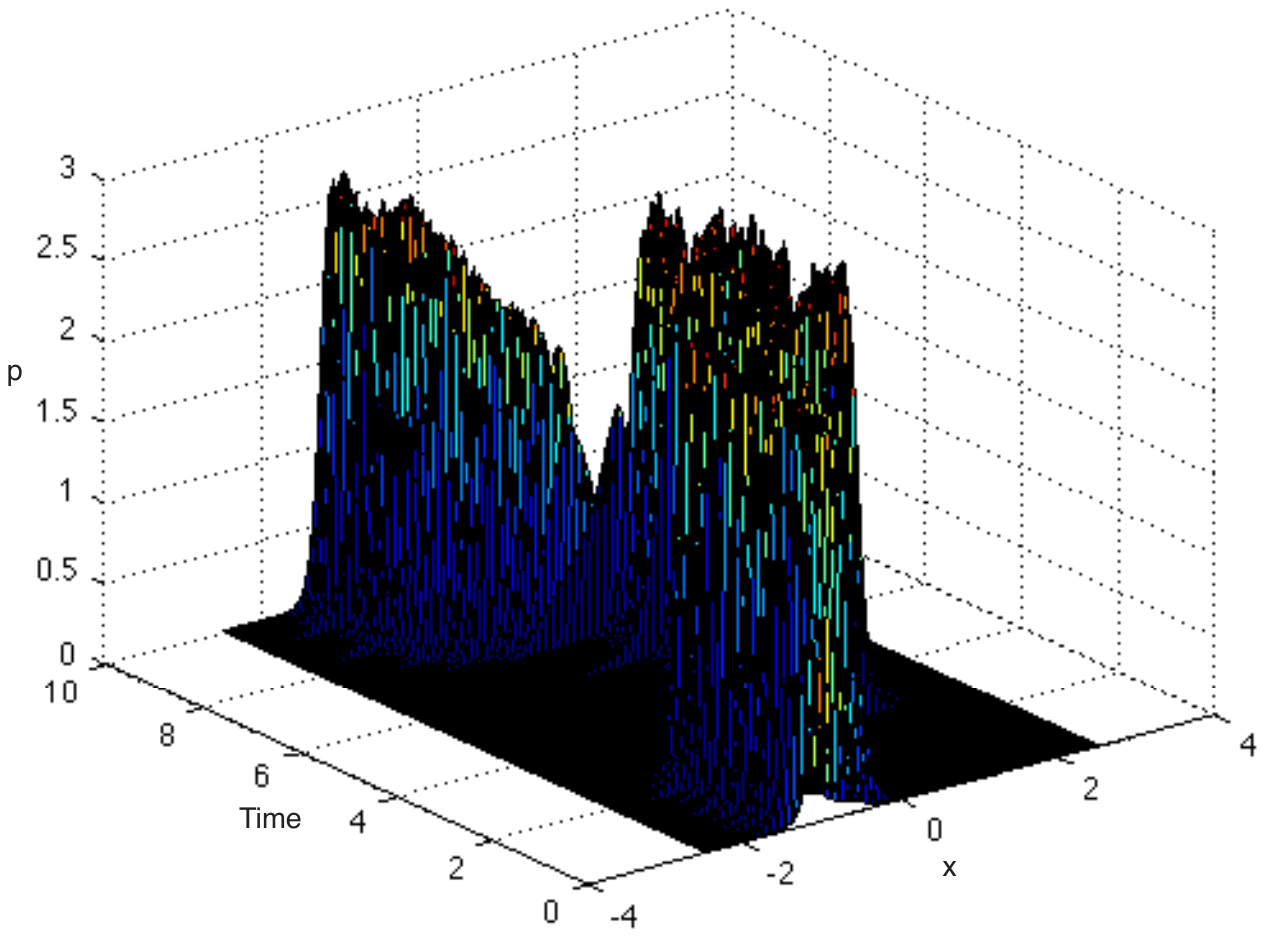}
\includegraphics*[width=6.2cm,height=5cm]{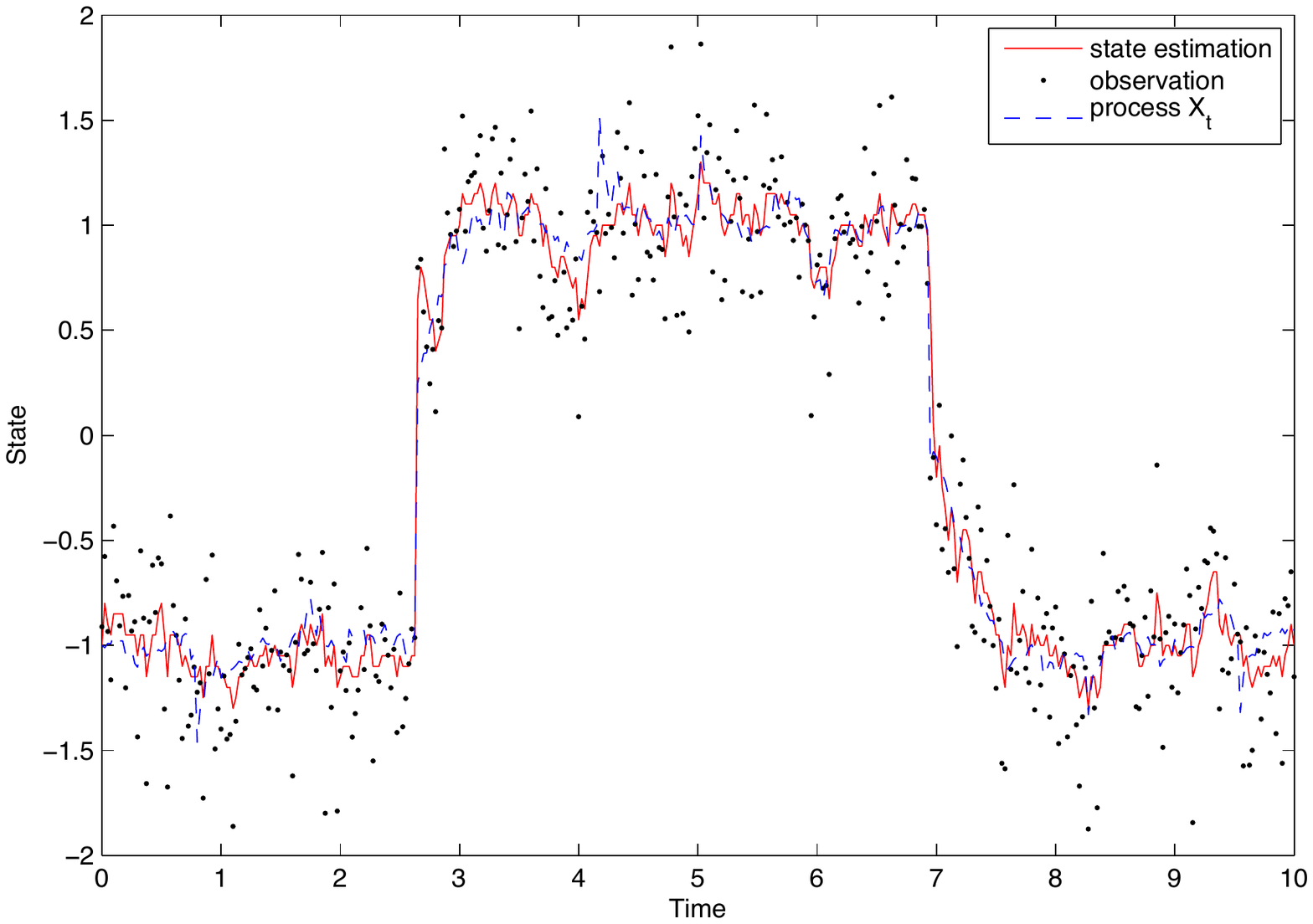}
\end{center}
\caption{Example \ref{discrete} --  Conditional probability and state estimation when
$\alpha=1.5$. The initial probability density is Gaussian centered
at $-1$.  Online version:  The red curve in the right panel is the estimate state orbit (i.e., most probable orbit).  }
\label{discrete1}
\end{figure}

\begin{figure}[]
\begin{center}
\includegraphics*[width=6.2cm,height=5cm]{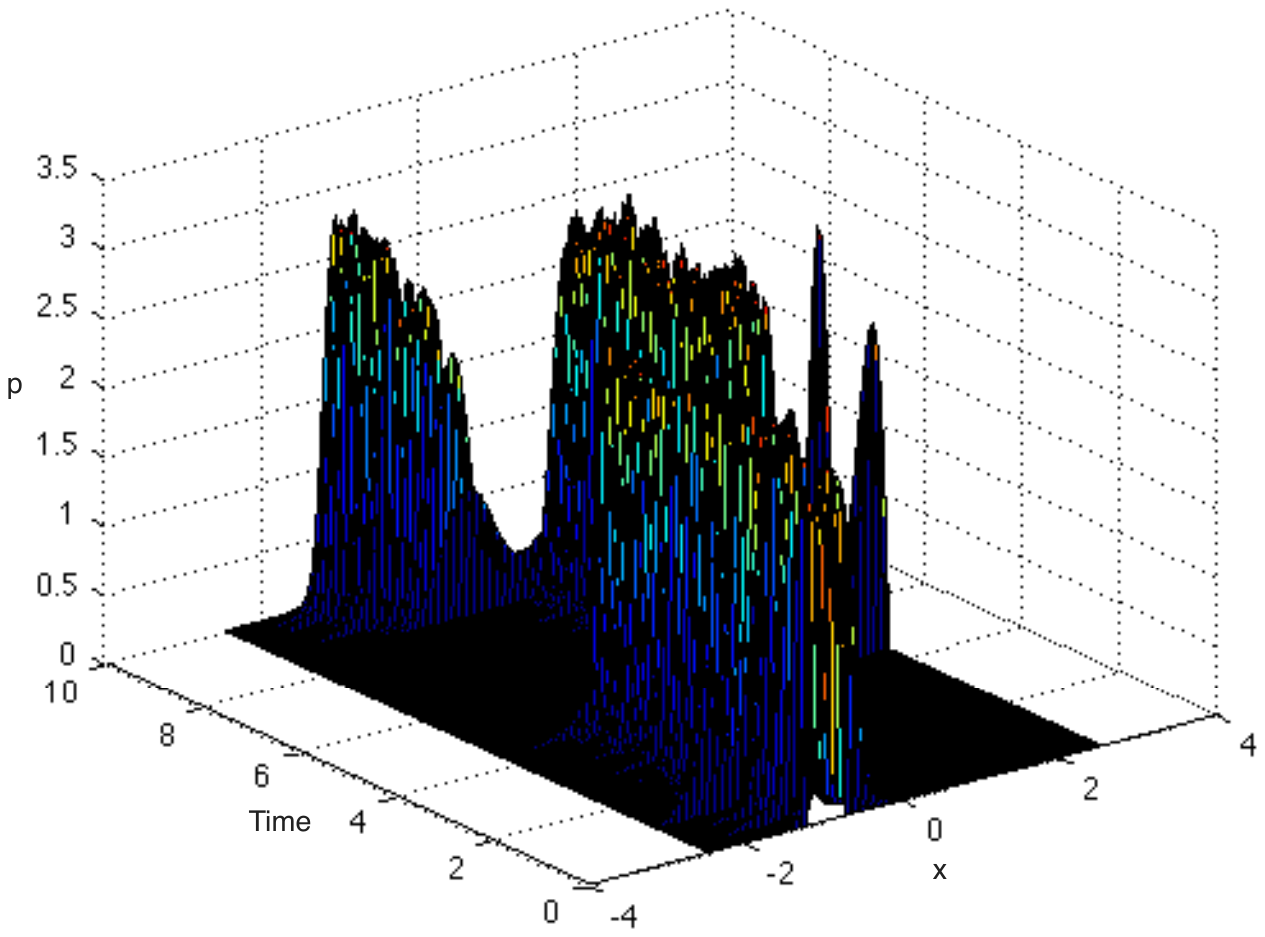}
\includegraphics*[width=6.2cm,height=5cm]{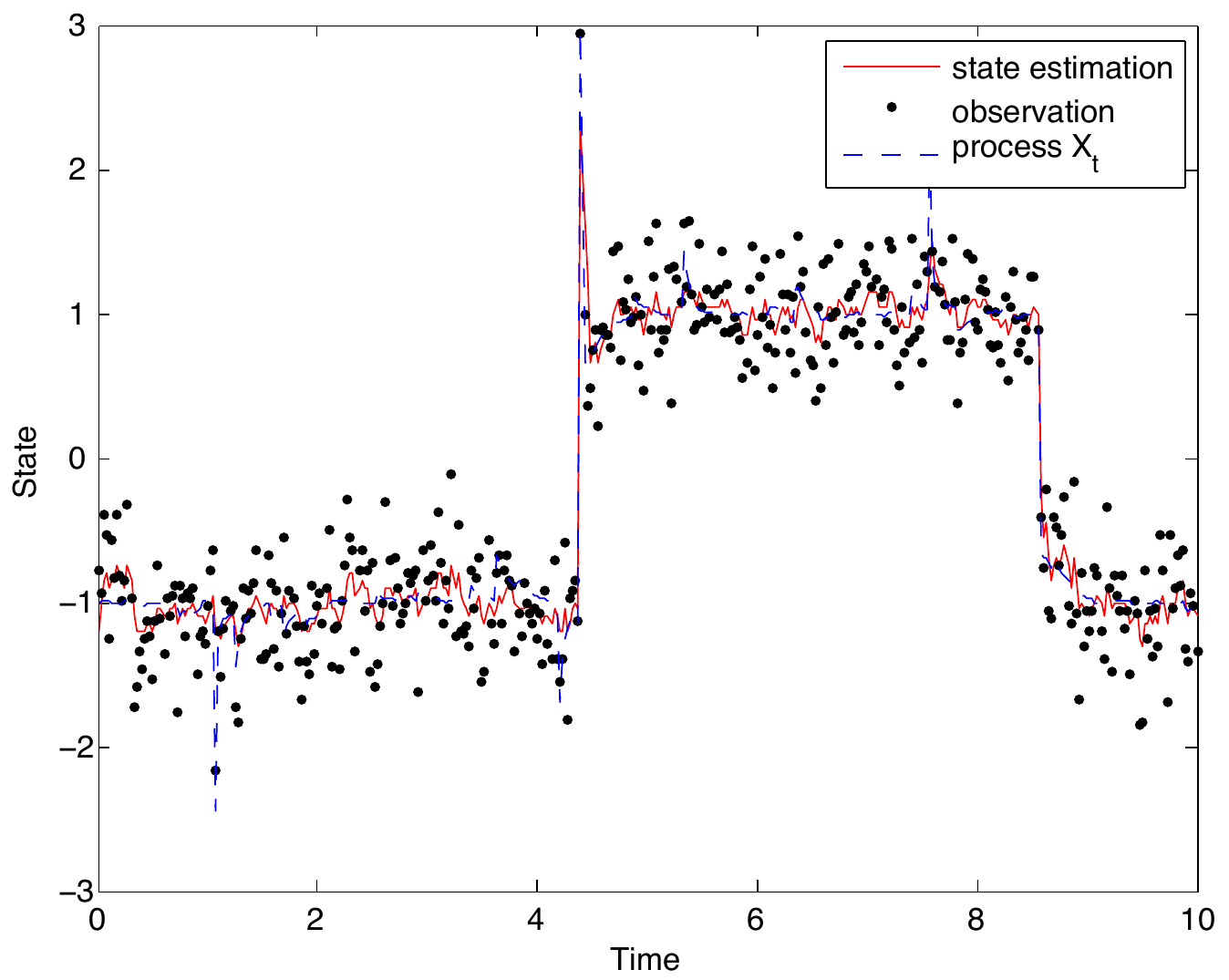}
\end{center}
\caption{Example \ref{discrete} --   Conditional probability and state estimation when
$\alpha=0.75$. The initial probability density is uniform on
$(-1.25,-0.75)$.   Online version:  The red curve in the right panel is the estimate state orbit (i.e., most probable orbit).   }
\label{discrete2}
\end{figure}

\end{example}



\section{Inferring transitions with continuous time observations} \label{continuous}

We consider the following  scalar  \emph{state system}
with a  symmetric $\alpha-$stable  L\'evy motion
\begin{equation} \label{state3}
d X_{t}  =  f (X_{t}, t)  dt +  d L_{t}^\alpha, \;\;     X_0 = x,
\end{equation}
together with
  a continuous time  scalar  \emph{observation system}:
 \begin{equation} \label{observe3}
dY_t = h(X_t, t)dt + d W_t, \;\;      Y_0=y,
\end{equation}
where $h$ is a given vector field and $W_t$ is a Brownian motion.

The unnormalized conditional probability density
$p(x, t|Y)$ satisfies a  nonlocal Zakai equation  ( \cite{Qiao, Miku, Popa}):

\begin{eqnarray} \label{Zakai}
dp(x, t|Y) = A^*p(x, t|Y) \; dt + h(x, t) p(x, t|Y)  \; dY_t,
\end{eqnarray}
where $A^*$ is the adjoint operator of the generator $A$:
\begin{eqnarray} \label{AABBCC}
A^*  \phi &=& - (f(x, t) \phi(x))^{'}     \nonumber  \\
& &+ \int_{\R^1 \setminus\{0\}} [\phi(x+  y)-\phi(x) -
I_{\{|y|<1\}} \; y \phi'(x) ] \; \nu_\alpha({\rm d}y),
\end{eqnarray}
and  $p(x, 0|Y)$ is the initial   density of $X_t$ (say a uniform distribution near the metastable state $-1$). The Zakai equation \eqref{Zakai} may be numerically solved with a finite difference method based on \cite{Gao, Gao2} together with a discretization of the noisy term at the current space-time point and $dY_t  \approx Y_{t+\Delta t} - Y_t$.  The initial probability density is taken either as a Gaussian distribution or a uniform distribution.    For other numerical methods, see, for example,  \cite{Loto, Crisan, Law, Cai, Yau}.

\begin{remark}
The normalized conditional probability density  $p(x,t\mid Y_t)$ satisfies the nonlinear Kushner's equation
\begin{align*}
dp(x,t\mid Y_t) &= A^*p(x,t\mid Y_t)dt + (h(x,t) - \hat h(x_t,t))(dz_t - \hat h(x,t)dt)p(x,t\mid Y_t)\\
&= A^*p(x,t\mid Y_t)dt + h(x,t)p(x,t\mid Y_t)dz_t - h\hat hpdt -\hat hpdz_t + \hat h\hat h pdt,
\end{align*}
where $\hat h(x_t,t)  $ is the mathematical expectation of $h(X_t, t)$, with respect to $p$.
 \end{remark}



The conditional density  $p(x, t|Y)$ provides information for the system evolution.
With the observation,    we can infer  possible transitions from the metastable state $-1$ to the metastable state $+1$, within a  time range $(0, T)$.
If the system starts with a   probability distribution $p_0(x)$ near  the metastable state $x=-1$,  then   the conditional density  $p(x, t|Y)$  helps us to infer whether  the system will get near the other metastable state $+1$, and vice versa.
This may be achieved by  examining  the most probable orbits for the system, under the observation.
Define $x^*(x_0, t) \triangleq \mbox{ maximizer for } \max_{x\in \R^1} p(x, t)$.\\
This provides the  most probable  orbit  (\cite{DuanBook2015, ChengDuanWang}) starting at $x_0$. We take this as our state estimation  for $X_t$, as in \cite{Miller}.



Let us illustrate this by an example.

\begin{example}  \label{continuous}
Let us consider the following scalar SDE state equation with a  symmetric $\alpha-$stable  L\'evy motion:
$$
dX_t=4(X_t-X_t^3)dt + \sqrt{0.24}\; d L^\alpha, \;\;     X_0=x_0.
$$
The scalar observation   equation is given by
$$
dY_t = X_tdt + \sqrt{0.05} \; dW_t
$$
When noise is absent, the state system has two stable states: $-1$ and $+1$.


The corresponding nonlocal Zakai equation is solved by a finite difference scheme with 
space stepsize $\Delta x=0.05$ and time
stepsize $\Delta t= 0.001$.  

In Figures (\ref{continuous1}) - (\ref{continuous2}), we show the conditional density $p(x, t)$, together with the corresponding most probable orbit (taken   as the state estimation  for $X_t$),  together with  a true state path $X_t$. The initial density is either Gaussian or uniform, but centered at the metastable state $-1$. Notice that the estimated state captures the transitions from $-1$ to $+1$, then back to $-1$, and finally to $+1$, during the time period $0 <t<50$.

\begin{figure}[]
\begin{center}
\includegraphics*[width=6.2cm,height=5cm]{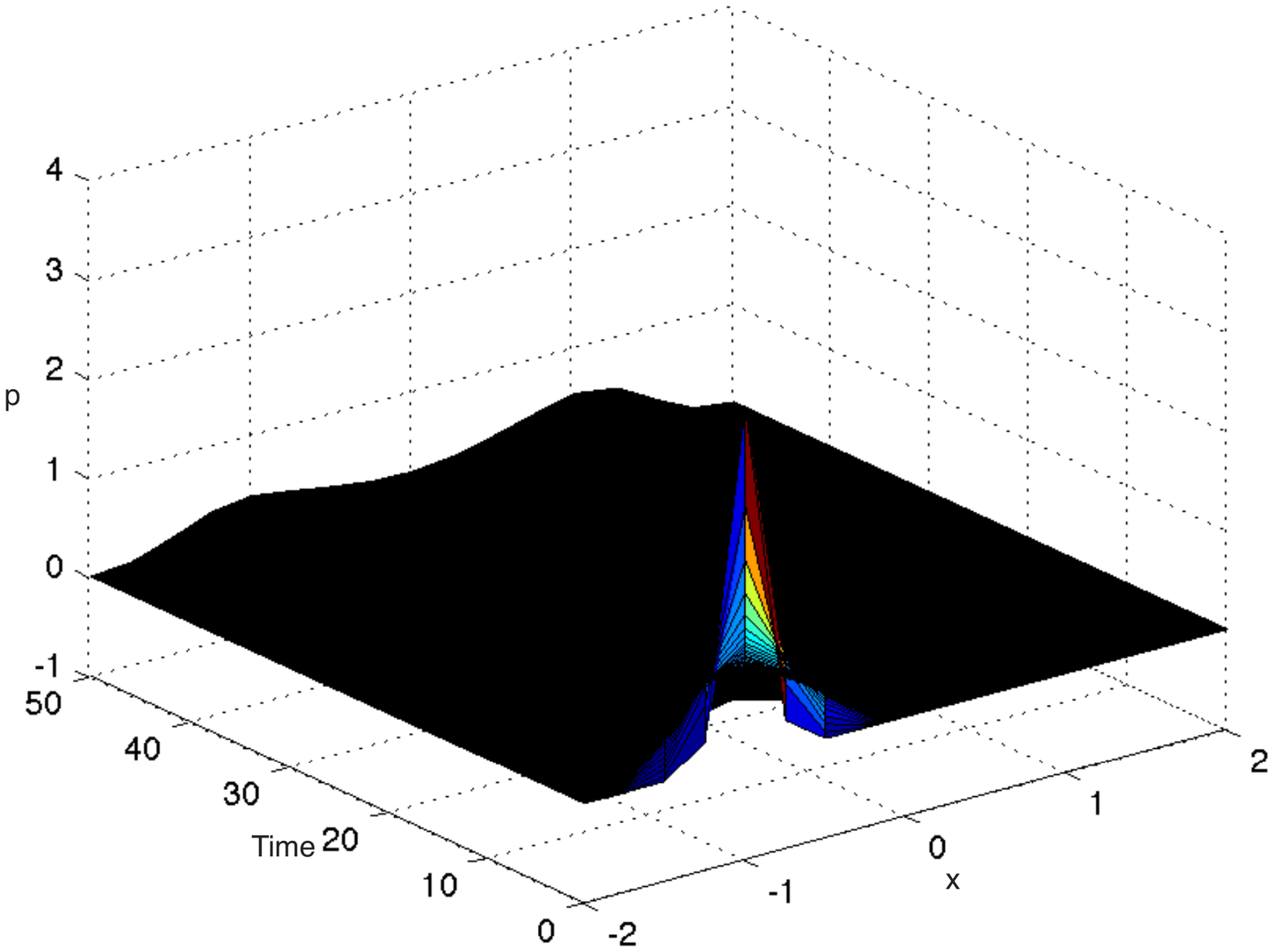}
\includegraphics*[width=6.2cm,height=5cm]{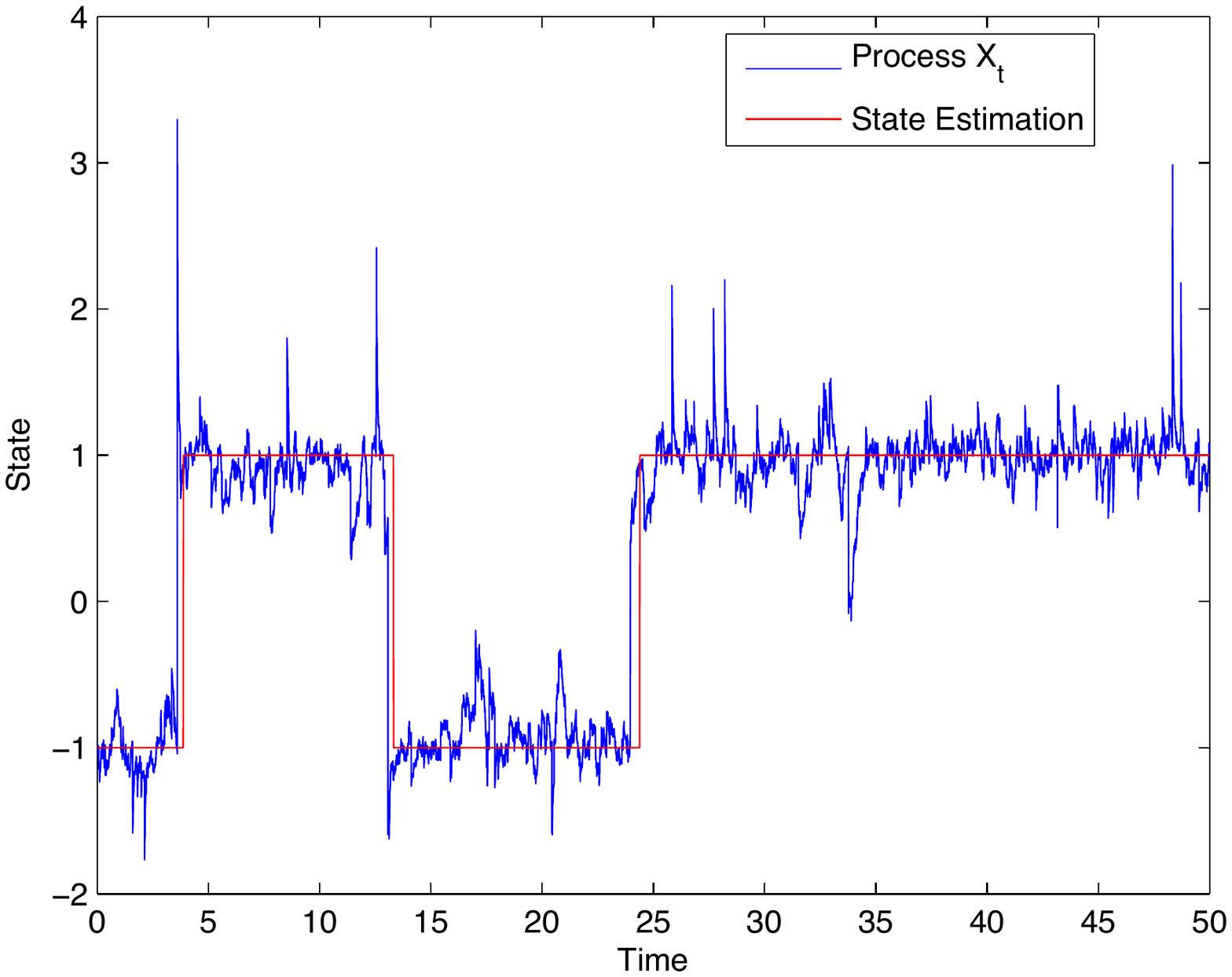}
\end{center}
\caption{Example \ref{continuous} --  Conditional probability (left) and state estimation (right) when
$\alpha=1.5$. The initial probability density is Gaussian centered
at $-1$.}
\label{continuous1}
\end{figure}

\begin{figure}[]
\begin{center}
\includegraphics*[width=6.2cm,height=5cm]{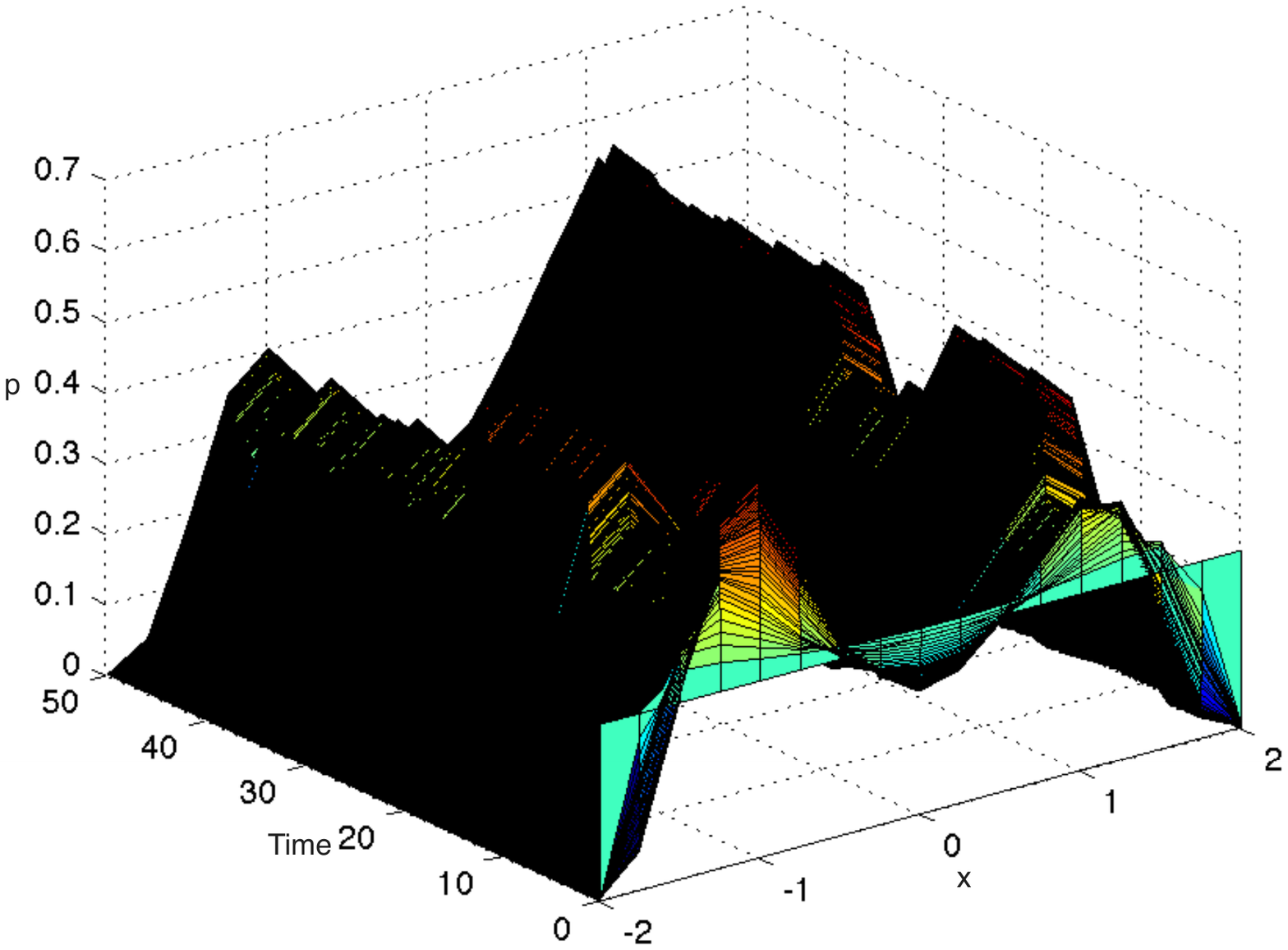}
\includegraphics*[width=6.2cm,height=5cm]{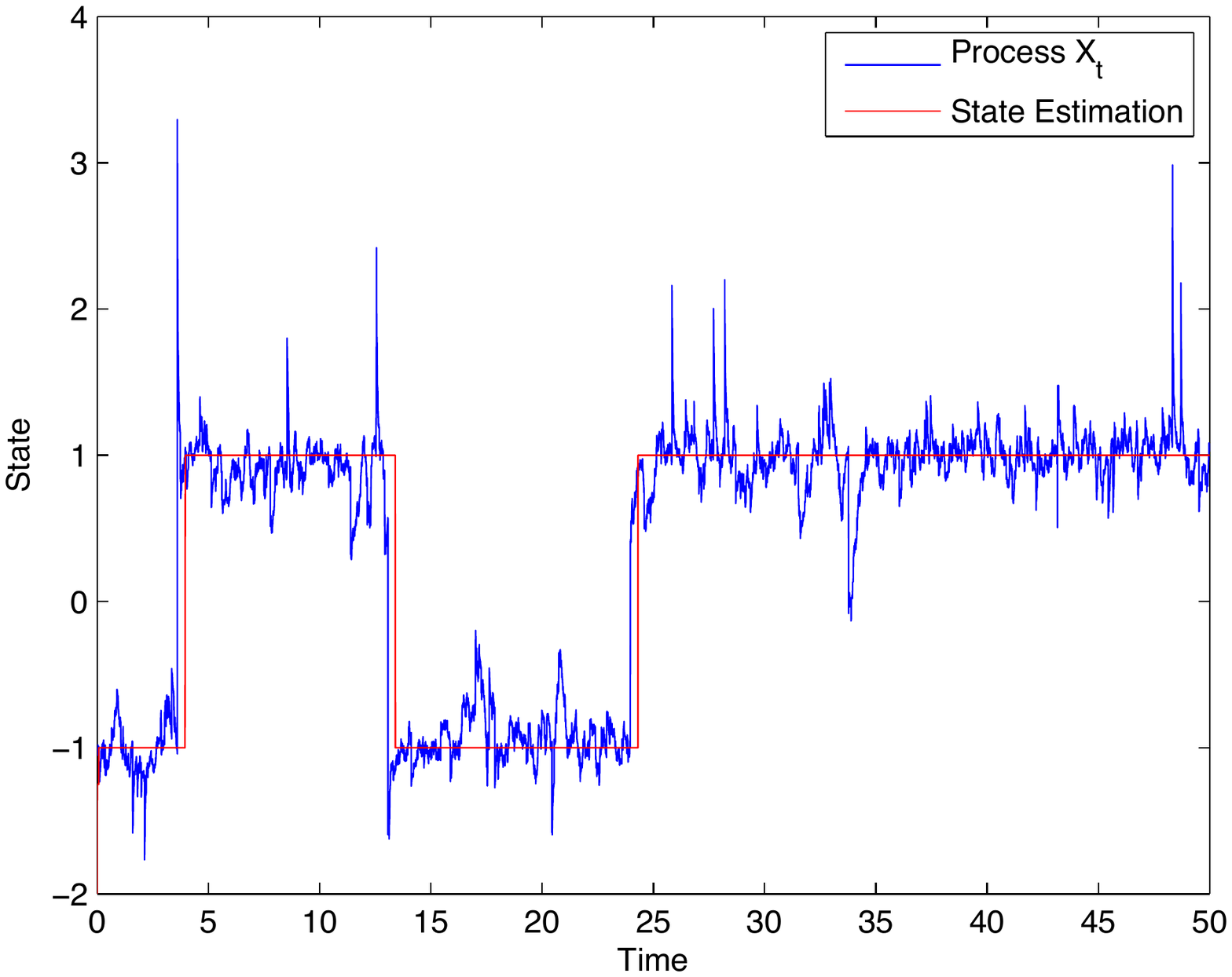}
\end{center}
\caption{Example \ref{continuous} --  Conditional probability (left) and state estimation (right) when
$\alpha=1.5$. The initial probability density is uniform on $(-2,2)$.}
\label{continuous2}
\end{figure}

\end{example}


\end{document}